\theoremstyle{plain}
\newtheorem{theorem}{Theorem}[section]
\newtheorem{proposition}[theorem]{Proposition}
\newtheorem{lemma}[theorem]{Lemma}
\theoremstyle{definition}
\theoremstyle{remark}
\begin{document}

\title[]
{Existence and uniqueness of solutions to Bogomol'nyi equations on graphs}

\author[Y. Hu]{Yuanyang Hu}
\address{Yuanyang Hu \\ School of Mathematics and Statistics \\ Henan University \\  Henan 475004, P. R. China.}
\email{yuanyhu@henu.edu.cn}

\thanks{This work was financially supported by NSFC Grant 12201184 and China Postdoctoral Science Foundation Grant 2022M711045.}

\subjclass[2020]{Primary 35A01, 35R02}
\keywords{Bogomol'nyi equation, Existence, Uniqueness, Finite graph, Equation on graph}

\begin{abstract}
Let $G=(V,E)$ be a connected finite graph.
We study the Bogomol'nyi equation 
\begin{equation*}
	\Delta u= \mathrm{e}^{u}-1 +4 \pi \sum_{s=1}^{k} n_s \delta_{z_{s}} \quad \text { on } \quad G,
\end{equation*}
where $z_1, z_2,\dots, z_k$ are arbitrarily chosen distinct vertices on the graph, $n_j$ is a positive integer, $j=1,2,\cdots, k$  and $\delta_{z_{s}}$ is the Dirac mass at $z_s$. We obtain a necessary and sufficient condition for the existence and uniqueness of solutions to the Bogomol'nyi equation.
\end{abstract}

\maketitle

\section{Introduction}

Magnetic vortices play important roles in many areas of theoretical physics including condensed-matter physics, cosmology, superconductivity theory, optics, electroweak theory, and quantum Hall effect. In \cite{JaffeTaubes}, Jaffe and Taubes established the existence of multivortex solutions to the Bogomol'nyi equations. Later, Wang and Yang \cite{WangYangSiam} established a sufficient and necessary condition for the existence of multivortex solutions of the Bogomol'nyi system. Recently, $(2+1)-$dimensional Chern-Simons gauge theory and generalized Abelian Higgs model have attracted extensive attention. The topological, non-topological and doubly periodic multivortices to the self-dual Chern-Simons model and the Abelian Higgs model were established over the past two decades; see, for example, \cite{ ChenHanLozaJDE, ChaImanuvilov, Hannonlinearity, Tarantello, TchraYang, YangchenHPC} and the references therein.

The discrete version of partial differential equations has found many application in many fields, see \cite{Elmoataz, Nakanishi}, for example. Analysis and partial differential equations on locally finite graphs have attracted a considerable amount of attention recently. For the aspect of Kazdan-Warner equations, see \cite{Gekazdan, GrigoryanLinYangCVPDE, GehuabinJiangwenfengJKMS}, for example. For the Yamabe type equations, we refer the readers to \cite{GehuabinYamabe, GehuabinJiangwenfeng, GrigoryanLinYangJDE}. For the counterpart of epidemic models, see \cite{AllenBolkerSIAM, TianLiuRuan} and the references therein. For the porous medium equations, see \cite{BianchiSettiCVpde, ErbarMaasdcds}. For the Fujita type heat equation, see \cite{Chungsy, LinWuCVPDE, WuyingtingRACSAM}. On the Chern-Simons equations on finite graph, see \cite{HuangLinYauCMP, HouSunCVPDEs, HuangWangYangJFA, LuZhongair}.

Inspired by these works \cite{HuangLinYauCMP, HouSunCVPDEs, HuangWangYangJFA, LuZhongair}, we study the Bogomol'nyi equation  
\begin{equation}\label{E}
	\Delta u= \mathrm{e}^{u}-1 +4 \pi \sum_{s=1}^{k} n_s \delta_{z_{s}} ,
\end{equation} on $G$, where $G=(V,E)$ is a connected finite graph, $k\ge 1$ is an integer, $n_{s}>0$ is an integer, $s=1,\cdots,k$ , $\delta_{z_{s}}$ satisfies 
\begin{equation}
	\delta_{z_{s}}(x)= \begin{cases} \frac{1}{\mu(z_{s})}, & x= z_{s} , \\ 0, & \text { otherwise }, \end{cases}
\end{equation}
and $z_1, z_2,\dots, z_k$ are arbitrarily chosen distinct vertices on the graph $G$. 

It is meaningful to study the partial differential equations on graphs, since the graph is a generation of the lattice $\mathbb{Z}^{d}$. We hope our results can be applied to numerical analysis.  

The paper is organized as follows. In Section 2, we present some basic results that will be used frequently in the following sections and state the main result. In Section 3, we give the proof of the main result.

\section{Preliminary results}
Let $G=(V,E)$ be a finite graph, where  $V$ denotes the vertex set and $E$ denotes the edge set. Throughout this paper, all graphs are supposed to be connected. We write $y\sim x$ if $xy\in E$. 
Let $\omega: V\times V\to [0,+\infty)$ be an edge weight function that satisfies $\omega_{xy}=\omega_{yx}$ for all $x,y\in V$ and $w_{xy}>0$ if and only if $x\sim y$.  Let $\mu: V \to (0,+\infty)$ be a finite measure, and 
$$|V|= \text{Vol}(V):=\sum \limits_{x \in V} \mu(x)$$ be the volume of $V$. Define $V^{\mathbb{R}}:=\{u|u~\text{is a real function}: V\to \mathbb{R}\}$. For any function $u\in V^{\mathbb{R}}$, the Laplacian of $u$ is defined by 
\begin{equation}\label{1}
	\Delta u(x)=\frac{1}{\mu(x)} \sum_{y \sim x} w_{y x}(u(y)-u(x)),
\end{equation}
where $y \sim x$ means $xy \in E$. The gradient form of $u$ reads 
\begin{equation}\label{g}
	\Gamma(u, v)(x)=\frac{1}{2 \mu(x)} \sum_{y \sim x} w_{x y}(u(y)-u(x))(v(y)-v(x)).
\end{equation}
We denote the length of the gradient of $u$ by
\begin{equation}\label{gra}
	|\nabla u|(x)=\sqrt{\Gamma(u,u)(x)}=\left(\frac{1}{2 \mu(x)} \sum_{y \sim x} w_{x y}(u(y)-u(x))^{2}\right)^{1 / 2}.
\end{equation}
Denote, for any function $
u: V \rightarrow \mathbb{R}
$, the integral of $u$ on $V$ by $$\int \limits_{V} u d \mu=\sum\limits_{x \in V} \mu(x) u(x).$$ For $p \ge 1$, denote $$|| u ||_{p}:=(\int \limits_{V} |u|^{p} d \mu)^{\frac{1}{p}}$$
and $$L^{p}(V)=\{f\in V^{\mathbb{R}}: ||f||_{p}<\infty\}.$$ 
As in \cite{GrigoryanLinYangCVPDE}, we define a Sobolev space and a norm by 
\begin{equation*}
H^{1}(V)=W^{1,2}(V)=\left\{u: V \rightarrow \mathbb{R}: \int \limits_{V} \left(|\nabla u|^{2}+u^{2}\right) d \mu<+\infty\right\},
\end{equation*}
and \begin{equation*}
	\|u\|_{H^{1}(V)}=	\|u\|_{W^{1,2}(V)}=\left(\int \limits_{V}\left(|\nabla u|^{2}+u^{2}\right) d \mu\right)^{1 / 2}.
\end{equation*}

The following Sobolev embedding, Maximum principle and Poincar\'{e} inequality will be used later in the paper.
\begin{lemma}\label{21}
	{\rm (\cite[Lemma 5]{GrigoryanLinYangCVPDE})} Let $G=(V,E)$ be a finite graph. The Sobolev space $W^{1,2}(V)$ is precompact. Namely, if ${u_j}$ is bounded in $W^{1,2}(V)$, then there exists some $u \in W^{1,2}(V)$ such that up to a subsequence, $u_j \to u$ in $W^{1,2}(V)$.
\end{lemma}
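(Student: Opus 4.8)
The plan is to observe that on a \emph{finite} graph the relevant function space is finite-dimensional, after which the assertion is essentially the Heine--Borel theorem. Set $N=\#V$, the number of vertices. Because $V$ is finite, $V^{\mathbb{R}}$ is a vector space of dimension $N$, identified with $\mathbb{R}^{N}$ via $u\mapsto(u(x))_{x\in V}$; and for every $u\in V^{\mathbb{R}}$ the integrals $\int_{V}|\nabla u|^{2}\,d\mu$ and $\int_{V}u^{2}\,d\mu$ are finite sums of finite terms, so $W^{1,2}(V)=V^{\mathbb{R}}$ as sets. Hence $W^{1,2}(V)$ is an $N$-dimensional vector space.

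Next I would verify that $\|\cdot\|_{W^{1,2}(V)}$ really is a norm on this space. It is induced by the symmetric bilinear form $\langle u,v\rangle:=\int_{V}\Gamma(u,v)\,d\mu+\int_{V}uv\,d\mu$, which is positive definite since $\langle u,u\rangle\ge\int_{V}u^{2}\,d\mu>0$ for $u\not\equiv 0$; thus $(W^{1,2}(V),\|\cdot\|_{W^{1,2}(V)})$ is a finite-dimensional inner product space. From this two standard facts follow: the space is complete, and all norms on it are equivalent, so there is a constant $C>0$ with
\begin{equation*}
C^{-1}\|u\|_{W^{1,2}(V)}\le\Big(\sum_{x\in V}u(x)^{2}\Big)^{1/2}\le C\|u\|_{W^{1,2}(V)}\qquad\text{for all }u\in W^{1,2}(V).
\end{equation*}

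Finally, given a bounded sequence $\{u_j\}\subset W^{1,2}(V)$, the norm equivalence shows that the tuples $(u_j(x))_{x\in V}$ are bounded in $\mathbb{R}^{N}$, so by the Bolzano--Weierstrass theorem there is a subsequence $\{u_{j_\ell}\}$ and a point $u\in\mathbb{R}^{N}\cong V^{\mathbb{R}}$ with $u_{j_\ell}(x)\to u(x)$ for each $x\in V$. Applying the norm equivalence once more gives $u_{j_\ell}\to u$ in $W^{1,2}(V)$, and $u\in W^{1,2}(V)$; this is exactly the claimed precompactness.

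I do not expect a genuine obstacle here: the whole content is that a finite graph forces finite-dimensionality, which renders the compactness statement elementary. One could alternatively avoid quoting equivalence of norms and extract the convergent subsequence of vertex-value tuples directly by a finite diagonal argument; the version above is shorter and moreover yields the stronger fact that \emph{every} bounded subset of $W^{1,2}(V)$ is relatively compact.
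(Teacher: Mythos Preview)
Your argument is correct: on a finite graph the space $W^{1,2}(V)$ coincides with $\mathbb{R}^{\#V}$, and the claim reduces to Bolzano--Weierstrass via equivalence of norms. Note that the paper itself supplies no proof of this lemma---it is simply quoted from \cite[Lemma~5]{GrigoryanLinYangCVPDE}---so there is nothing in the present paper to compare against; your proof is the standard one and matches what the cited reference contains.
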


\begin{lemma}\label{22}
	{\rm (\cite[Lemma 4.1]{HuangLinYauCMP})} Let $G=(V,E)$ be a graph, where $V$ is a finite set, and $K> 0$ is constant. Suppose a real function $u(x): V\to \mathbb{R}$ satisfies $(\Delta-K)u(x) \ge 0$ for all $x\in V$, then $u(x) \le 0$ for all $x \in V $.
\end{lemma}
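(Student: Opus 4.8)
The plan is to run the discrete maximum principle: localize $u$ at a vertex where it is largest and read off the sign of $\Delta u$ there. Since $V$ is a \emph{finite} set, $u$ attains its maximum at some vertex $x_{0}\in V$, say $u(x_{0})=\max_{x\in V}u(x)=:M$. For every neighbour $y\sim x_{0}$ one has $u(y)-u(x_{0})\le 0$, while $w_{yx_{0}}\ge 0$ and $\mu(x_{0})>0$; hence, directly from the definition \eqref{1} of the Laplacian,
\begin{equation*}
\Delta u(x_{0})=\frac{1}{\mu(x_{0})}\sum_{y\sim x_{0}}w_{yx_{0}}\bigl(u(y)-u(x_{0})\bigr)\le 0 .
\end{equation*}

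The second step is to feed this into the hypothesis $(\Delta-K)u\ge 0$ evaluated at $x_{0}$: it gives $KM=K\,u(x_{0})\le \Delta u(x_{0})\le 0$, and since $K>0$ this forces $M\le 0$. Because $x_{0}$ realizes the maximum of $u$ on $V$, we conclude $u(x)\le M\le 0$ for all $x\in V$, which is the assertion. (Equivalently one may argue by contradiction: if $M>0$, a maximizer of $u$ yields $KM\le 0$, which is impossible.)

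I do not expect a genuine obstacle here, as the argument is essentially one line; the only points that deserve a moment's care are that the maximum is in fact attained — which is exactly where finiteness of $V$ enters, the statement being false on general infinite graphs without extra hypotheses — and the elementary fact that the graph Laplacian is nonpositive at a maximizer, which is immediate from \eqref{1}. Applying the same computation to $-u$ gives the companion minimum principle: $(\Delta-K)u\le 0$ with $K>0$ implies $u\ge 0$ on $V$.
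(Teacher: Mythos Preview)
Your argument is correct. Note that the paper does not actually supply a proof of this lemma --- it is quoted from \cite{HuangLinYauCMP} --- but the maximum-principle reasoning you give (evaluate at a maximizer, use $\Delta u(x_0)\le 0$, conclude $KM\le 0$) is exactly the argument the paper deploys inline in the proofs of Lemmas~\ref{32} and~\ref{35}, so your approach matches the paper's own use of the idea.
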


\begin{lemma}\label{23}
	{\rm (\cite[Lemma 6]{GrigoryanLinYangCVPDE})}	Let $G = (V, E)$ be a finite graph. For all functions $u : V \to \mathbb{R}$ with $\int \limits_{V} u d\mu = 0$, there 
	exists some constant $C$ depending only on $G$ such that $$\int \limits_{V} u^2 d\mu \le C \int \limits_{V} |\nabla u|^2 d\mu.$$
\end{lemma}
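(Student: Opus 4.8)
The plan is to prove this Poincaré inequality by a compactness-and-contradiction argument that exploits the finite-dimensionality of $V^{\mathbb{R}}$ on a finite graph. First I would record the global form of the Dirichlet energy: combining \eqref{gra} with the definition of the integral gives
\begin{equation*}
\int \limits_{V} |\nabla u|^2 \, d\mu = \frac{1}{2}\sum_{x \in V}\sum_{y \sim x} w_{xy}\bigl(u(y)-u(x)\bigr)^2 .
\end{equation*}
This exhibits the left-hand side as a continuous (indeed quadratic) functional of the coordinates $\bigl(u(x)\bigr)_{x\in V}$, and, since $w_{xy}>0$ exactly when $x\sim y$, it vanishes if and only if $u(y)=u(x)$ for every edge $xy\in E$.

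Suppose, for contradiction, that no such constant $C$ exists. Then for each $j\in\mathbb{N}$ there is a function $u_j$ with $\int_V u_j\,d\mu=0$ and $\int_V u_j^2\,d\mu > j\int_V |\nabla u_j|^2\,d\mu$. Because the claimed inequality is invariant under multiplication by scalars, I may normalize so that $\int_V u_j^2\,d\mu=1$; then $\int_V |\nabla u_j|^2\,d\mu < 1/j \to 0$. In particular $\{u_j\}$ is bounded in $W^{1,2}(V)$, so by Lemma \ref{21} (or simply because $\dim V^{\mathbb{R}}=|V|<\infty$) a subsequence converges to some $u\in W^{1,2}(V)$. Passing to the limit along this subsequence, the three functionals $u\mapsto\int_V u\,d\mu$, $u\mapsto\int_V u^2\,d\mu$ and $u\mapsto\int_V |\nabla u|^2\,d\mu$ are each a finite sum in the coordinates and hence continuous, which yields $\int_V u\,d\mu=0$, $\int_V u^2\,d\mu=1$, and $\int_V |\nabla u|^2\,d\mu=0$. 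By the energy identity above, the last equality forces $u(y)=u(x)$ whenever $x\sim y$; since $G$ is connected, any two vertices are joined by a chain of adjacent vertices, so $u$ is constant on $V$. The constraint $\int_V u\,d\mu=0$ then forces that constant to be $0$, i.e. $u\equiv 0$, contradicting $\int_V u^2\,d\mu=1$. This contradiction establishes the inequality.

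The step I expect to require the most care is coupling the passage to the limit with the connectedness argument: one must check that the normalization $\int_V u^2\,d\mu=1$ genuinely survives in the limit (it does, because $L^2$ convergence on a finite graph is just coordinatewise convergence) and that vanishing Dirichlet energy, which only constrains differences across single edges, upgrades to \emph{global} constancy by propagating equality along paths in the connected graph $G$. I would also note an equivalent spectral route that avoids contradiction and pins down the optimal constant: regarding $-\Delta$ as a self-adjoint, positive semidefinite operator on $L^2(V,\mu)$ via the identity $\int_V(-\Delta u)v\,d\mu=\int_V \Gamma(u,v)\,d\mu$, its kernel is exactly the constants by connectedness, so the orthogonal complement of the constants is precisely $\{u:\int_V u\,d\mu=0\}$; the min-max principle then gives $\int_V |\nabla u|^2\,d\mu \ge \lambda_1(G)\int_V u^2\,d\mu$ on that subspace, with $\lambda_1(G)>0$ the first nonzero eigenvalue, so one may take $C=1/\lambda_1(G)$, a constant depending only on $G$.
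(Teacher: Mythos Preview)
Your compactness-and-contradiction argument is correct, as is the spectral alternative you sketch; both are standard routes to the Poincar\'e inequality on a connected finite graph, and your care about connectedness and coordinatewise convergence is well placed. Note, however, that the paper does not supply its own proof of this lemma: it is quoted verbatim from \cite[Lemma~6]{GrigoryanLinYangCVPDE} and used as a black box, so there is nothing in the paper to compare your argument against.
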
 

Let $f\in {V}^{\mathbb{R}}$, we are going to consider the equation 
\begin{equation}\label{6.}
	-\Delta u=f 
\end{equation}
on $G$.
\begin{lemma}\label{2.4}
Assume $G=(V,E)$ is a graph. Then there exists a solution to \eqref{6.} iff $\int_{V} fd\mu=0$. Furthermore, the solution is unique up to a constant.
\end{lemma}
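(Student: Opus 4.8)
The plan is to read \eqref{6.} as a linear equation on the finite-dimensional space $V^{\mathbb{R}}$ and to produce a solution by the direct method of the calculus of variations, which also serves as a warm-up for the variational treatment of the nonlinear equation \eqref{E}. Throughout I will use the summation-by-parts identity
\[
\int_V \Gamma(u,v)\,d\mu \;=\; -\int_V (\Delta u)\,v\,d\mu \;=\; -\int_V u\,(\Delta v)\,d\mu ,
\]
which follows at once from \eqref{1} and \eqref{g} by relabelling $x\leftrightarrow y$ and using $w_{xy}=w_{yx}$; in particular, taking $v\equiv 1$ gives $\int_V \Delta u\,d\mu = 0$ for every $u$. \emph{Necessity} is then immediate: if $-\Delta u=f$ then $\int_V f\,d\mu = -\int_V \Delta u\,d\mu = 0$.

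For \emph{sufficiency}, assume $\int_V f\,d\mu=0$ and set
\[
\mathcal{B}:=\Big\{u\in V^{\mathbb{R}}:\ \int_V u\,d\mu=0\Big\},\qquad
J(u):=\frac12\int_V|\nabla u|^2\,d\mu-\int_V fu\,d\mu .
\]
Since $V$ is finite, $\mathcal B$ is a finite-dimensional normed space and $J$ is a continuous (quadratic) function on it. By the Poincar\'e inequality (Lemma~\ref{23}) there is $C=C(G)$ with $\int_V|\nabla u|^2\,d\mu\ge C^{-1}\|u\|_2^2$ on $\mathcal B$, so the Cauchy--Schwarz inequality gives $J(u)\ge \frac{1}{2C}\|u\|_2^2-\|f\|_2\|u\|_2$, whence $J$ is coercive on $\mathcal B$ and attains its infimum at some $u_0\in\mathcal B$. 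Testing the minimality of $u_0$ against an arbitrary $\phi\in\mathcal B$, i.e. differentiating $t\mapsto J(u_0+t\phi)$ at $t=0$ and using the summation-by-parts identity, yields $\int_V(\Delta u_0+f)\phi\,d\mu=0$ for all $\phi\in\mathcal B$. Hence $\Delta u_0+f$ is orthogonal, in $L^2(V)$, to the hyperplane $\mathcal B$, so it equals a constant $c$; integrating over $V$ and invoking $\int_V\Delta u_0\,d\mu=0=\int_V f\,d\mu$ forces $c\,|V|=0$, so $c=0$ and $-\Delta u_0=f$. Equivalently, one can avoid variations: the identity above shows $\Delta$ is self-adjoint on $V^{\mathbb{R}}$ equipped with $\langle u,v\rangle=\int_V uv\,d\mu$, and its kernel is exactly the constants by the uniqueness argument below, so $\mathrm{Im}\,\Delta=(\ker\Delta)^{\perp}=\mathcal B$.

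For \emph{uniqueness}, if $-\Delta u_1=f=-\Delta u_2$ then $w:=u_1-u_2$ is harmonic, and testing $\Delta w=0$ with $w$ gives $0=\int_V w\,\Delta w\,d\mu=-\int_V|\nabla w|^2\,d\mu$; since $w_{xy}>0$ whenever $x\sim y$, this forces $w(y)=w(x)$ for all adjacent $x,y$, and connectedness of $G$ makes $w$ constant. Alternatively one evaluates $\Delta w$ at a maximum point of $w$ in the spirit of Lemma~\ref{22}. Thus any two solutions of \eqref{6.} differ by a constant, and conversely adding a constant preserves solutions.

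I do not expect a genuine obstacle here; the argument is soft once the summation-by-parts identity is in hand. The only steps that demand attention are the last part of the existence proof — showing that the Lagrange multiplier $c$ vanishes, which is precisely where the hypothesis $\int_V f\,d\mu=0$ is consumed — and the fact, needed for both the kernel computation and the uniqueness, that a harmonic function on a connected finite graph must be constant.
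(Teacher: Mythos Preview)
Your proof is correct. Both you and the paper use the direct method together with the Poincar\'e inequality (Lemma~\ref{23}), but the variational setups differ. The paper minimizes the pure Dirichlet energy $I(u)=\int_V|\nabla u|^2\,d\mu$ on the set $\{\int_V u\,d\mu=0,\ \int_V uf\,d\mu=1\}$, handles the two constraints via Lagrange multipliers, and must then verify that the infimum $\alpha$ is strictly positive (forcing a preliminary case split on $f\equiv 0$) before rescaling to obtain a solution. You instead minimize the natural Poisson energy $J(u)=\tfrac12\int_V|\nabla u|^2\,d\mu-\int_V fu\,d\mu$ on the single hyperplane $\{\int_V u\,d\mu=0\}$, where the equation drops out directly and the multiplier is killed by integrating. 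Your route is shorter and avoids the positivity check; the paper's route is closer in spirit to a Rayleigh-quotient argument. Your alternative observation that $\Delta$ is self-adjoint with kernel the constants, so that $\operatorname{Im}\Delta=(\ker\Delta)^\perp$, is the cleanest of all and bypasses any minimization.
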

\begin{proof}
We first prove sufficiency. If $f\equiv 0$, then by the definition of $\Delta$ we know that $u$ is a constant. Next, we suppose that $f\not\equiv 0$ and that $\int_{V} fd\mu=0$. Define $$I(u)=\int \limits_{V} |\nabla u|^2 d\mu$$ and $\alpha:=\inf\limits_{u \in A} I(u),$ where 
	\begin{equation*}
		A=\left\{u \in H^{1}(V) \mid \int_V u d \mu=0 \text { and } \int_V u f d\mu=1\right\} \text {. }
	\end{equation*}
Clearly, $0\le \alpha\in \mathbb{R}$, and $0 \le \alpha \le I\left(\frac{f}{\|f\|_2^2}\right)$. Let $\{u_{k}\}_{k=1}^{\infty}$ be a minimizing sequence in $A$ satisfying $I(u_{k})\to \alpha$. In view of $I(u_{k})=||\nabla u_{k}||_{2}^{2}$, we see that $\{|\nabla u_{k}|\}_{k=1}^{\infty}$ is bounded in $L^{2}(V)$.
Furthermore, since $\int_{V}u_{k}d\mu=0$, by Lemma \ref{23}, we see that 
\begin{equation*}
	\int_V u_k^2 d\mu \leq C \int_V\left|\nabla u_k\right|^2 d\mu.
\end{equation*}
Thus $\{u_{k}\}_{k=1}^{\infty}$ is bounded in $H^{1}(V)$. By Lemma \ref{21}, we can find a sequence $\{u_{i}\}$ of $\{u_{k}\}$ and $u_0\in H^{1}(V)$ so that $u_{i}\to u_{0}$ in $H^{1}(V)$. It follows that $\lim\limits_{i \rightarrow+\infty} u_i(x)=u_0(x)$ for all $x\in V$. Thus we obtain
\begin{equation*}
	\lim _{i \rightarrow \infty} \int_V\left|\nabla u_i\right|^2 d\mu=\int_V\left|\nabla u_0\right|^2 d \mu=\alpha,
\end{equation*} 
\begin{equation*}
	\lim _{i \rightarrow \infty} \int_V fu_i d\mu=\int_V fu_0 d\mu=1,
\end{equation*}
and
\begin{equation*}
	\lim _{i \rightarrow \infty} \int_V u_i d\mu=\int_V u_0 d\mu=0.
\end{equation*}
Hence, $u_0$ is the minimizer of the variational problem $\inf\limits_{u\in A}I(u)$. 

We next show that there exist constants $\beta$ and $\gamma$ such that 
\begin{equation}\label{7.7}
	\int_V \Gamma (u_0,\psi) d\mu=-\beta \int_V f \psi d\mu-\gamma \int_{V}\psi d\mu \text { for all } \psi \in H^{1}(V) \text {.}
\end{equation}
by the method of Lagrange multiplies.
Define
\begin{equation*}
L(t, \beta, \gamma)
		=\frac{1}{2} \int_V\left|\nabla\left(u_0+t \psi\right)\right|^2 d\mu+\gamma \int_V\left(u_0+t \psi\right) d\mu+\beta\left(\int_V\left(u_0+t \psi\right) f d\mu-1\right),
\end{equation*}
where $\beta, \gamma\in \mathbb{R}$ and $\psi\in H^{1}(V)$.
Then we get 
\begin{equation}\label{7.}
	\left.\frac{\partial L}{\partial \beta}\right|_{t=0}=0,\left.\quad \frac{\partial L}{\partial \gamma}\right|_{t=0}=0 \text { and }\left.\frac{\partial L}{\partial t}\right|_{t=0}=0 \text {. }
\end{equation}
By \eqref{g} and \eqref{gra}, we see that 
\begin{equation*}
	\begin{aligned}
	&\quad  \left.\frac{d}{d t} \Gamma\left(u_0+t \psi, u_0+t \psi\right)(x)\right|_{t=0}\\ & \left.=\frac{d}{d t}\left\{\frac{1}{2 \mu(x)} \sum_{y \sim x} \omega_{x y}\left[ \left(u_0+t \psi\right)(y)-\left(  u_0+t \psi\right)(x)\right]^2\right\}\right|_{t=0} \\
		& =\left.\sum_{y \sim x}\frac{\omega_{x y}}{2 \mu(x)}  2\left[\left(u_0+t \psi\right)(y)-\left(u_0+t \psi\right)(x)\right][\psi(y)-\psi(x)]\right|_{t=0} \\
		& =\frac{1}{2 \mu(x)} \sum_{y \sim x} \omega_{x y} 2\left[u_0(y)-u_0(x)\right][\psi(y)-\psi(x)] \\
		& =2 \Gamma\left(u_0, \psi\right),
	\end{aligned}
\end{equation*}
and hence that  
\begin{equation*}
	\left.\frac{\partial L}{\partial t}\right|_{t=0}=\int_V \Gamma\left(u_0, \psi\right) d \mu+\gamma \int_V \psi d \mu+\beta \int_V \psi f d\mu=0
\end{equation*} by \eqref{7.} and the definition of $L$.
Taking $\psi\equiv 1$. Since 
$$\int_V \Gamma (u_0,1) d\mu=0$$ and $\int_{V} f d\mu=0$, we get $\gamma=0$. Choosing $\psi=u_{0}$ then we get $\beta=-\alpha$. Hence, 
\begin{equation}\label{2.2}
	\int_V \Gamma (u_0,\psi) d\mu=\alpha \int_V f \psi d\mu~\text{ for all }~\psi\in H^{1}(V).
\end{equation}
Since $\int_{V}u_0 fd\mu=1$, $u_0$ is not a constant. We claim that $\alpha>0$. Otherwise $\alpha=0$. Then $\int_{V}|\nabla u_0|^{2} d\mu=0$, which implies that $|\nabla u_0|\equiv 0$. By \eqref{gra}, we deduce that $u\equiv C_2$, where  $C_2>0$ is a constant. This is a contradiction. Thus, $\alpha=I(u_0)>0$. Letting $$\tilde{u}_{0}=\frac{u_0}{\alpha}.$$ Then it follows from \eqref{2.2} that
\begin{equation*}
	\int_V \Gamma (\tilde{u}_0,\psi)  d\mu=\int_V f \psi d\mu \text { for all } \psi\in H^{1}(V).
\end{equation*}
Integrating by parts, we have 
\begin{equation*}
	-\int_V \psi \Delta \tilde{u}_0 d\mu=\int_V f \psi d\mu \text { for all } \psi \in H^{1}(V).
\end{equation*} For any $x_0\in V$, taking 
\begin{equation*}
	\psi(x)= \begin{cases}1, & x=x_0,\\
		 0, & \text{otherwise.}\end{cases}
\end{equation*}
Then we deduce that $-\Delta\tilde{u}_{0}(x_0)=f(x_0)$. Therefore, $\tilde{u}_{0}$ is a solution to \eqref{1}.

 We next prove the necessity. Suppose $-\Delta u=f$. Since $\int_{V}\Delta ud\mu=0$, we know that $\int_{V}fd\mu=0$.

	If $f\equiv 0$, then $u$ is a constant by the definition of $\Delta$. Thus, we know that the solution to \eqref{6.} is unique up to a constant.
	
We now complete the proof.
\end{proof}

We state our main result as follows.
\begin{theorem}\label{t1}
	The equation \eqref{E} admits a unique solution, if and only if 
	\begin{equation}
		n_1 + n_2 + \cdots + n_k=: N <\frac{|V|}{4 \pi} .
	\end{equation}
\end{theorem}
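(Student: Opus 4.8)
For \emph{necessity}, I would integrate \eqref{E} over $V$ against $\mu$. Since $\int_V \Delta u\,d\mu = 0$ for every $u\in V^{\mathbb{R}}$ (the defining double sum is antisymmetric in $x,y$) and $\int_V \delta_{z_s}\,d\mu = 1$, the equation forces $\int_V e^{u}\,d\mu = |V| - 4\pi N$; positivity of the left-hand side then yields $4\pi N < |V|$.

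For \emph{sufficiency}, assume $N < |V|/(4\pi)$ and set $c := 4\pi N/|V| \in (0,1)$. By Lemma \ref{2.4} (applicable since $\int_V(4\pi\sum_{s}n_s\delta_{z_s} - c)\,d\mu = 4\pi N - 4\pi N = 0$), fix a function $u_0$ with $\Delta u_0 = 4\pi\sum_{s}n_s\delta_{z_s} - c$. Substituting $u = u_0 + v$ reduces \eqref{E} to the $\delta$-free equation $\Delta v = e^{u_0+v} - (1-c)$, which is the Euler--Lagrange equation of
\begin{equation*}
J(v) = \frac12\int_V|\nabla v|^2\,d\mu + \int_V e^{u_0+v}\,d\mu - (1-c)\int_V v\,d\mu, \qquad v\in H^1(V).
\end{equation*}
The plan is to minimize $J$. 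Writing $v = \bar v + w$ with $\bar v = \frac1{|V|}\int_V v\,d\mu$ and $\int_V w\,d\mu = 0$ gives $J(v) = \frac12\|\nabla w\|_2^2 + e^{\bar v}\int_V e^{u_0+w}\,d\mu - (1-c)|V|\bar v$, and Jensen's inequality bounds $\int_V e^{u_0+w}\,d\mu \ge |V|e^{\overline{u_0}}$, a positive constant independent of $w$ (because $\int_V w\,d\mu = 0$). Consequently $J(v) \ge \frac12\|\nabla w\|_2^2 + m_0$ for a finite constant $m_0$ independent of $v$, while also $J(v) \ge |V|e^{\overline{u_0}}\,e^{\bar v} - (1-c)|V|\bar v \to +\infty$ as $|\bar v|\to\infty$ --- here the sign condition $1-c>0$, i.e.\ $N < |V|/(4\pi)$, is exactly what makes the linear term dominate for $\bar v\to-\infty$. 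Combined with Poincar\'e's inequality (Lemma \ref{23}), which converts $\|\nabla w\|_2$ into control of $\|w\|_{H^1(V)}$, these two estimates show $J$ is coercive on $H^1(V)$. A minimizing sequence is then bounded in $H^1(V)$; by Lemma \ref{21} it subconverges to a minimizer $v_*$, and differentiating $J(v_* + t\psi)$ at $t=0$, integrating by parts, and testing against functions supported at a single vertex yields $\Delta v_* = e^{u_0+v_*} - (1-c)$. Hence $u := u_0 + v_*$ solves \eqref{E}.

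For \emph{uniqueness} I would invoke strict convexity: $J$ is a sum of a convex quadratic, the convex functional $v\mapsto\int_V e^{u_0+v}\,d\mu$, and a linear term, and its second variation $\int_V\Gamma(\psi,\psi)\,d\mu + \int_V e^{u_0+v}\psi^2\,d\mu$ vanishes only for $\psi\equiv 0$ on the connected graph $G$; hence $J$ has at most one critical point, and since any solution $u$ of \eqref{E} gives the critical point $u - u_0$, the solution is unique. (Alternatively, if $u_1,u_2$ both solve \eqref{E}, then $\phi := u_1-u_2$ satisfies $\Delta\phi = e^{u_1}-e^{u_2}$, which has the same sign as $\phi$ pointwise; evaluating $\Delta\phi$ at a vertex where $\phi$ is maximal forces $\phi\le 0$, and at a vertex where $\phi$ is minimal forces $\phi\ge 0$, so $\phi\equiv 0$.)

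The main obstacle is the coercivity of $J$: one must split off the mean value $\bar v$, bound the exponential integral below by a positive constant via Jensen, and observe that the remaining term $-(1-c)|V|\bar v$ controls large $|\bar v|$ precisely under the hypothesis $N < |V|/(4\pi)$. The Euler--Lagrange computation, the integration by parts, and the uniqueness argument are all routine on a finite graph.
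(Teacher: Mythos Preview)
Your proposal is correct, but the existence argument takes a genuinely different route from the paper's. For necessity you integrate~\eqref{E}, which is exactly what the paper does (Lemma~\ref{31}), and your alternative maximum-principle uniqueness argument coincides with the paper's Lemma~\ref{35}. The divergence is in the sufficiency step: you run a direct variational argument, minimizing the strictly convex functional $J$ and reading off the solution as the Euler--Lagrange equation, with coercivity driven by the mean-value splitting and Jensen's inequality. The paper instead uses a sub/super-solution monotone iteration: it first analyzes the linear operator $P=\Delta-e^{u_0}-1$ via Lax--Milgram and the Fredholm alternative to produce a supersolution $U$, builds a subsolution $Z$ by hand, and then iterates $(\Delta-K)w_{n+1}=e^{u_0+w_n}-Kw_n+4\pi Nf-1$ from $w_0=U$ down to the solution. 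Your approach is shorter and more self-contained, needing only Lemmas~\ref{21}--\ref{2.4} and no operator-theoretic machinery; the paper's approach, on the other hand, yields an explicit monotone iterative scheme converging to the solution, which aligns with the numerical-analysis motivation mentioned in the introduction and gives pointwise bounds $Z\le W\le U$ for free.
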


\section{The proof of Theorem \ref{t1}}
Throughout this section, we assume that $N=\sum\limits_{s=1}^{k} n_s$ and that $f$ is a function on $V$ satisfying $$\int_{V} f d \mu=1.$$ Since 
\begin{equation*}
	\int_{V} -4\pi Nf+4 \pi \sum_{j=1}^{k} n_j  \delta_{z_{j}} d\mu=-4\pi N+4\pi N=0,
\end{equation*}
by Lemma \ref{2.4}, we know that there exists a unique solution to the Poisson equation
\begin{equation*}
	\Delta u_{0}=-4 \pi N f +4 \pi \sum_{j=1}^{k} n_j  \delta_{z_{j}},
\end{equation*}
in the sense of differing by a constant. Assume $u$ is a solution of \eqref{E}, let $v:=u-u_0$, then $v$ satisfies 
\begin{equation*}\label{3}
	\Delta v=e^{v+u_0 }-1+ 4 \pi N f .
\end{equation*}

Define an operator $P:=\Delta-e^{u_0}-1: W^{1,2}(V) \to L^{2}(V)$, then we have the following proposition.
\begin{proposition} \label{p1}
	$P$ is bijective.
\end{proposition}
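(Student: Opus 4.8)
The plan is to use that, because $G$ is a \emph{finite} graph, $W^{1,2}(V)$ and $L^{2}(V)$ are literally the same finite-dimensional vector space $V^{\mathbb{R}}$, and $P$ is a linear endomorphism of it. Hence it suffices to show that $P$ is injective; bijectivity then follows from the rank--nullity theorem. (Equivalently, one may instead prove surjectivity directly and get injectivity from the same inequality — see below.)

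To prove injectivity, suppose $Pu=0$, i.e. $\Delta u=(e^{u_{0}}+1)u$ on $V$. Multiplying by $u$, integrating over $V$ against $d\mu$, and integrating by parts (using $\omega_{xy}=\omega_{yx}$, exactly as in the computation carried out in the proof of Lemma \ref{2.4}), I obtain
\begin{equation*}
-\int_{V}|\nabla u|^{2}\,d\mu=\int_{V}u\,\Delta u\,d\mu=\int_{V}(e^{u_{0}}+1)\,u^{2}\,d\mu .
\end{equation*}
The left-hand side is $\le 0$ while the right-hand side is $\ge 0$, since $e^{u_{0}}+1>0$ at every vertex of the finite set $V$; therefore both sides vanish. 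In particular $\int_{V}(e^{u_{0}}+1)u^{2}\,d\mu=0$ with a strictly positive weight, which forces $u\equiv 0$. Hence $\ker P=\{0\}$, and $P$ is bijective.

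A variational alternative, more in the spirit of Lemma \ref{2.4}, establishes surjectivity directly: given $g\in L^{2}(V)$, the functional $J(u)=\frac12\int_{V}|\nabla u|^{2}\,d\mu+\frac12\int_{V}(e^{u_{0}}+1)u^{2}\,d\mu+\int_{V}gu\,d\mu$ is strictly convex and coercive on $W^{1,2}(V)$ — its quadratic part dominates $\tfrac12\|u\|_{W^{1,2}(V)}^{2}$ because $e^{u_{0}}+1\ge 1$ — so by Lemma \ref{21} it attains a minimizer, whose Euler--Lagrange equation is precisely $Pu=g$; together with the injectivity above this reproves bijectivity. I do not expect any real obstacle here: the only points needing care are the integration-by-parts identity (already performed in the proof of Lemma \ref{2.4}) and the observation that the zeroth-order coefficient $e^{u_{0}}+1$ is bounded below by a positive constant on $V$, which is automatic since $V$ is finite and $u_{0}$ is a fixed function; the rest is the finite-dimensional reduction ``injective $\Rightarrow$ bijective''.
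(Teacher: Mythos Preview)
Your proof is correct. The paper takes a slightly different route: it defines the bilinear form $B(u,v)=\int_{V}\Gamma(u,v)+(e^{u_{0}}+1)uv\,d\mu$, checks that it is bounded and coercive on $H^{1}(V)$, and then invokes the Lax--Milgram theorem to conclude that for every $g$ there is a unique $u\in H^{1}(V)$ with $B(u,v)=-\int_{V}gv\,d\mu$ for all $v$, which is equivalent to $Pu=g$. Your argument relies on essentially the same coercivity estimate (it appears in your injectivity computation and again in your variational alternative), but you replace the abstract Lax--Milgram step by the elementary observation that on a finite graph $W^{1,2}(V)=L^{2}(V)=V^{\mathbb{R}}$ is a finite-dimensional space, so a linear endomorphism is bijective as soon as it is injective. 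Your approach is shorter and more transparent in the finite setting; the paper's Lax--Milgram packaging is the version that would carry over verbatim to an infinite (locally finite) graph, which is pertinent to the open question raised in the concluding remarks.
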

\begin{proof}
	For any $u,v \in H^{1}(V)$, define 
	$$B(u,v):=\int\limits_{V} \Gamma (u,v)+(e^{u_0} +1) uv d\mu.$$
	By Cauchy Schwartz inequality and H$\ddot{\text{o}}$lder inequality, we deduce that
	\begin{equation*}
		\begin{aligned}
			|B(u, v)| 	& \leq \int_{V} |\Gamma(u, v)|
			+\left(e^{u_{0}}+1\right)|u||v| d \mu \\
			& \leq \int_{V} \sum_{y \sim x} \frac{w_{x y}}{2 \mu(x)}|u(y)-u(x)||v(y)-v(x)| d \mu\\
			&~~~~+\max _{V}\left(e^{u_{0}}+1\right)\left(\int_{V} u^{2} d \mu \right)^{\frac{1}{2}}\left(\int_{V} v^{2} d \mu\right)^{\frac{1}{2}} \\
			& \leq \int_{V}\left(\sum_{y \sim x} \frac{w_{x y}}{2 \mu(x)}(u(y)-u(x))^{2}\right)^{\frac{1}{2}}\left(\sum_{y \sim x} \frac{w_{x y}}{2 \mu(x)}(v(y)-v(x))^{2}\right)^{\frac{1}{2}} d\mu\\
			&~~~~~+C_1 ||u||_{2}||v||_{2} \\
			&=\int_{V}[\Gamma(u, u)]^{\frac{1}{2}}[\Gamma(v, v)]^{\frac{1}{2}} d \mu+C_1 ||u||_{2}||v||_{2} \\
			&=\int_{V} | \nabla u|| \nabla v|d \mu+C_1 || u ||_{2} ||v||_{2} \\
			& \leq\left(\int_{V}|\nabla u|^{2} d \mu\right)^{\frac{1}{2}}\left(\int_{V}|\nabla v|^{2} d \mu \right)^{\frac{1}{2}}+C_{1}||u||_{2}||v||_{2},
		\end{aligned}
	\end{equation*}
	where $C_1=\max\limits_{V} (e^{u_0}+1)$.
	By \eqref{g} , we have 
	\begin{equation}\label{6}
		B(u,u)\ge \int\limits_{V} |\nabla u|^2 + \min\limits_{V}(e^{u_0}+ 1) u^2 d\mu.
	\end{equation}
	Therefore, we can find constants $C_1, C_2>0$ such that 
	\begin{equation}
		|B(u,v)| \le C_1||u||_{H^{1}(V)} ||v||_{H^{1}(V)},
	\end{equation}
	and
	\begin{equation}
		B(u,u) \ge C_2||u||^{2}_{H^{1}(V)}.
	\end{equation}
	
	It is easy to check that $B:~H^{1}(V)\times H^{1}(V)\to \mathbb{R}$ is a bilinear mapping.
	Thus, by the Lax-Milgram Theorem, for any function $g$ on $V$, there exists a unique element $u\in H^{1}(V)$ such that
	\begin{equation}\label{5}
		B(u,v)=-\int\limits_{V} gv d \mu, 
	\end{equation}
	for any $v\in H^{1}(V)$. Since \eqref{5} is equivalent to $Pu=g$, we see that $P:H^{1}(V)\to L^{2}(V)$ is bijective.
	
 We next prove necessity.  Suppose $\Delta u=f$. Since $\int_{V}\Delta u d\mu=0$, we know that $\int_{V}f d\mu=0$. 

We now complete the proof.
\end{proof}
By Proposition \ref{p1}, we can define the inverse operator of $P$ by $P^{-1}$. Furthermore, we have the following proposition. 
\begin{proposition}\label{p2}
	$P^{-1}: L^2(V) \to H^1(V)$ is compact.
\end{proposition}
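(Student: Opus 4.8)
The plan is to show directly that $P^{-1}$ sends bounded subsets of $L^{2}(V)$ into relatively compact subsets of $H^{1}(V)$; by Lemma~\ref{21} this amounts to producing an a priori bound of the form $\|P^{-1}g\|_{H^{1}(V)}\le C\|g\|_{2}$ and then invoking the precompactness of $W^{1,2}(V)$.

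First I would take an arbitrary sequence $\{g_{j}\}_{j=1}^{\infty}$ in $L^{2}(V)$ with $\|g_{j}\|_{2}\le M$ and set $u_{j}:=P^{-1}g_{j}\in H^{1}(V)$, so that, by the variational characterization \eqref{5} of $P^{-1}$ obtained in the proof of Proposition~\ref{p1},
\begin{equation*}
	B(u_{j},v)=-\int_{V} g_{j} v \, d\mu \quad \text{for all } v\in H^{1}(V).
\end{equation*}
Testing with $v=u_{j}$ and combining the coercivity estimate $B(u_{j},u_{j})\ge C_{2}\|u_{j}\|_{H^{1}(V)}^{2}$ established in the proof of Proposition~\ref{p1} with the Cauchy--Schwarz inequality, I would obtain
\begin{equation*}
	C_{2}\|u_{j}\|_{H^{1}(V)}^{2}\le B(u_{j},u_{j})=-\int_{V} g_{j} u_{j} \, d\mu \le \|g_{j}\|_{2}\,\|u_{j}\|_{2}\le M\|u_{j}\|_{H^{1}(V)},
\end{equation*}
whence $\|u_{j}\|_{H^{1}(V)}\le M/C_{2}$. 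Thus $\{u_{j}\}$ is bounded in $W^{1,2}(V)$, and Lemma~\ref{21} yields a subsequence $\{u_{j_{i}}\}$ converging in $W^{1,2}(V)$ to some $u$. Since every bounded sequence in $L^{2}(V)$ therefore has an image under $P^{-1}$ with an $H^{1}(V)$-convergent subsequence, $P^{-1}\colon L^{2}(V)\to H^{1}(V)$ is compact.

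I do not anticipate a genuine obstacle here: the two ingredients --- coercivity of $B$ and the precompactness statement of Lemma~\ref{21} --- are already available, and the proof is essentially a matter of chaining them together, the only care needed being the correct instantiation of the coercivity estimate at $u=u_{j}$. In fact, since $G$ is a finite graph, both $L^{2}(V)$ and $H^{1}(V)$ are finite-dimensional, so the mere boundedness of $P^{-1}$ already forces compactness; the argument above is simply the formulation that mirrors the continuous setting and keeps the exposition self-contained.
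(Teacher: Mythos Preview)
Your proposal is correct and follows essentially the same route as the paper: test the variational identity \eqref{5} at $v=u$, use the coercivity of $B$ to derive an a priori bound $\|P^{-1}g\|_{H^{1}(V)}\le C\|g\|_{2}$, and then invoke Lemma~\ref{21}. The only cosmetic difference is that the paper uses Young's inequality with $\epsilon$ in place of your direct Cauchy--Schwarz step, arriving at a bound of the form $\|u\|_{H^{1}(V)}^{2}\le C_{3}\|g\|_{2}^{2}$ rather than your linear estimate; both yield the same conclusion.
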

\begin{proof}
	For any $g \in L^{2}(V)$, by Proposition \ref{31}, there exists $u\in H^{1}(V)$ such that $Pu=g$, which is equivalent to
	\begin{equation*}
		B(u,v)=-\int\limits_{V} gv d \mu, 
	\end{equation*} for any $v \in H^{1}(V)$. By Cauchy inequality with $\epsilon$, ($\epsilon>0$), we see that
	\begin{equation*}
		B(u,u)=-\int\limits_{ V} gu d\mu \le \frac{1}{4 \epsilon}\int\limits_{ V} g^2 d\mu+ \epsilon \int\limits_{ V} u^2 d\mu.
	\end{equation*}
	Thus, from \eqref{6}, we conclude that
	\begin{equation}\label{7}
		\int\limits_{ V} |\nabla u|^2+C_2 u^2 d \mu \le   \frac{1}{4 \epsilon}\int\limits_{ V} g^2 d\mu+ \epsilon \int\limits_{ V} u^2 d\mu.
	\end{equation}
	Taking $\epsilon=\frac{C_2}{2}$ in \eqref{7}, we deduce that 
	\begin{equation*}
		\int\limits_{ V} |\nabla u|^2+\frac{C_2}{2}  u^2 d \mu \le   \frac{1}{2 C_2}\int\limits_{ V} g^2 d\mu.
	\end{equation*}
	Therefore, there exists $C_3>0$ such that \begin{equation*}
		||u||^{2}_{H^{1}(V)} \le C_3 \int\limits_{V} g^{2} d\mu.
	\end{equation*}
	Thus, by Lemma \ref{21}, we know that $P^{-1}: L^2(V) \to H^1(V)$ is compact. 
\end{proof}

Next, we give a necessary condition for equation \eqref{3} to have a solution.
\begin{lemma}\label{31}
	If the equation \eqref{3} admits a solution, then $4\pi N< |V|.$
\end{lemma}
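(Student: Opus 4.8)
The plan is to integrate equation \eqref{3} over the whole vertex set $V$ against the measure $\mu$ and read off the stated inequality from the resulting balance law.

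First I would record the elementary identity that, for \emph{any} $w\in V^{\mathbb{R}}$ on the finite graph $G$, one has $\int_V \Delta w\, d\mu = 0$. Indeed, from the definition \eqref{1},
\begin{equation*}
\int_V \Delta w\, d\mu = \sum_{x\in V}\mu(x)\cdot\frac{1}{\mu(x)}\sum_{y\sim x}\omega_{yx}\bigl(w(y)-w(x)\bigr) = \sum_{x\in V}\sum_{y\sim x}\omega_{yx}\bigl(w(y)-w(x)\bigr),
\end{equation*}
and this double sum is antisymmetric under interchanging $x$ and $y$ because $\omega_{xy}=\omega_{yx}$, hence it vanishes. (This is the same cancellation already invoked implicitly in the proof of Lemma \ref{2.4} when one writes $\int_V\Delta u\,d\mu=0$.)

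Now suppose $v\in V^{\mathbb{R}}$ solves \eqref{3}, that is, $\Delta v = \mathrm{e}^{v+u_0} - 1 + 4\pi N f$ on $V$. Integrating both sides over $V$, using the vanishing just noted on the left and the normalization $\int_V f\, d\mu = 1$ on the right, we get
\begin{equation*}
0 = \int_V \mathrm{e}^{v+u_0}\, d\mu - |V| + 4\pi N, \qquad\text{i.e.}\qquad \int_V \mathrm{e}^{v+u_0}\, d\mu = |V| - 4\pi N .
\end{equation*}
Since $\mu>0$ on $V$ and $\mathrm{e}^{v+u_0}>0$ at every vertex, the left-hand side is strictly positive, whence $4\pi N < |V|$, which is the claim.

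There is essentially no obstacle: the content of the lemma is just the observation that integrating the equation turns the Dirac sources into the positive term $4\pi N$ while the exponential term contributes something strictly positive. Equivalently one could run the same computation on the original equation \eqref{E} after setting $u=v+u_0$, using $\int_V \delta_{z_s}\, d\mu = \mu(z_s)\cdot\frac{1}{\mu(z_s)} = 1$ for each $s$; the two routes coincide. The only step that calls for a line of justification is the identity $\int_V\Delta(\cdot)\,d\mu=0$, and that is immediate from $\omega_{xy}=\omega_{yx}$ together with the finiteness of $G$. Note that, combined with the substitution $u=v+u_0$ used just before the statement, this lemma already yields the "only if" direction of Theorem \ref{t1}.
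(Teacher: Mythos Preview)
Your proof is correct and follows essentially the same route as the paper's own argument: integrate \eqref{3} over $V$, use $\int_V \Delta v\,d\mu=0$ and $\int_V f\,d\mu=1$ to obtain $\int_V e^{v+u_0}\,d\mu = |V|-4\pi N$, and conclude from strict positivity of the exponential. The only difference is that you spell out the vanishing of $\int_V \Delta(\cdot)\,d\mu$ from the symmetry of the weights, whereas the paper simply invokes it.
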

\begin{proof}
	Assume that $v$ is a solution of the equation \eqref{3}, then \begin{equation}
		0=\int\limits_{ V} \Delta v d \mu=\int\limits_{V} e^{v+u_0}-1+4\pi Nf d\mu=\int\limits_{ V}e^{v+u_0 } d\mu -|V| +4\pi N.
	\end{equation}
	Thus, we have $4\pi N < |V|$.
\end{proof}

The following lemma gives the uniqueness of solutions of the equation \eqref{3}.
\begin{lemma}\label{32}
	There exists at most one solution of \eqref{3}.
\end{lemma}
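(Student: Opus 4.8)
The plan is to argue by the discrete maximum principle. Suppose $v_1$ and $v_2$ are both solutions of \eqref{3} and set $w:=v_1-v_2$. Subtracting the two copies of \eqref{3}, the terms $-1$ and $4\pi N f$ cancel and one obtains
\begin{equation*}
	\Delta w = e^{u_0}\bigl(e^{v_1}-e^{v_2}\bigr)\qquad\text{on }V .
\end{equation*}
Since $G$ is finite, $w$ attains its maximum at some vertex $x_0\in V$. At $x_0$ every difference $w(y)-w(x_0)$ with $y\sim x_0$ is nonpositive, so by \eqref{1} (and $\omega_{yx_0}>0$, $\mu(x_0)>0$) we get $\Delta w(x_0)\le 0$; hence $e^{u_0(x_0)}\bigl(e^{v_1(x_0)}-e^{v_2(x_0)}\bigr)\le 0$. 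As $e^{u_0(x_0)}>0$ and $t\mapsto e^{t}$ is strictly increasing, this forces $v_1(x_0)\le v_2(x_0)$, i.e. $w(x_0)\le 0$. Because $x_0$ is a maximum point of $w$, we conclude $w\le 0$ on $V$, that is, $v_1\le v_2$ everywhere. Interchanging the roles of $v_1$ and $v_2$ gives $v_2\le v_1$ on $V$ as well, so $v_1\equiv v_2$, which is the assertion.

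I do not expect a serious obstacle. The only subtlety worth noting is that one cannot feed the difference equation directly into Lemma \ref{22}: writing $e^{v_1}-e^{v_2}=a(x)\,w(x)$ with $a(x)=e^{u_0(x)+\xi(x)}>0$ (mean value theorem, $\xi(x)$ between $v_1(x)$ and $v_2(x)$) yields $\Delta w=a\,w$, but $a$ is not constant and $w$ has no a priori sign, so $(\Delta-K)w\ge 0$ need not hold for any constant $K$. Evaluating at the maximum vertex as above—which is essentially re-running the short proof of Lemma \ref{22}—sidesteps this.

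An alternative that fits the integration-by-parts style used earlier in the paper: multiply the difference equation by $w$ and integrate to get $-\int_V|\nabla w|^2\,d\mu=\int_V e^{u_0}\bigl(e^{v_1}-e^{v_2}\bigr)w\,d\mu\ge 0$, since $e^{v_1}-e^{v_2}$ and $w=v_1-v_2$ have the same sign at every vertex and $e^{u_0}>0$. Hence $|\nabla w|\equiv 0$, so $w$ is constant, say $w\equiv c$; substituting back gives $e^{u_0+v_2}(e^{c}-1)\equiv 0$, whence $c=0$ and $v_1\equiv v_2$. Either route completes the proof; I would present the maximum-principle version as the main argument.
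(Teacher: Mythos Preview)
Your proof is correct and follows essentially the same route as the paper: both take the difference of two solutions, evaluate at a vertex where the difference attains its maximum, use that the discrete Laplacian is nonpositive there, and conclude via the monotonicity of the exponential (the paper phrases this last step through the mean value theorem and a contradiction with $M>0$, whereas you use monotonicity of $e^t$ directly, but the substance is identical). Your remark that Lemma~\ref{22} does not apply verbatim because the coefficient $a(x)=e^{u_0+\xi}$ is not constant is accurate, and the paper likewise bypasses it by arguing directly at the maximum point.
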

\begin{proof}
	If $u$ and $v$ both satisfy equation \eqref{3}, by mean value theorem, there exists $\xi$ such that
	\begin{equation*}
		\Delta u- \Delta v =e^{u+u_0}-e^{v+u_0}=e^{\xi+u_0}(u-v).
	\end{equation*}
	Let $M=\max\limits_{V} (u-v)=(u-v) (x_0)$. We claim that $M\le 0$. Otherwise, $M>0$. Thus, we deduce that \begin{equation*}
		\Delta (u-v) (x_0)=[e^{\xi+ u_0} (u-v)](x_0)>0.
	\end{equation*}
	By \eqref{1}, we have $\Delta(u-v)(x_0)\le 0.$ This is a contradiction. Thus, we have $u(x)\le v(x)$ on $V$. Therefore, we obtain $u\equiv v$ on $V$.
\end{proof}

\begin{lemma}\label{33}
	Suppose that $|V|>4\pi N$. Then there exist $U$ and $Z$ satisfying $U\ge Z$ such that \begin{equation*}
		\Delta U- e^{U+u_0}-(4\pi N f -1)\le 0,
	\end{equation*}
	and
	\begin{equation*}
		\Delta Z- e^{Z+u_0}-(4\pi N f -1)\ge 0.
	\end{equation*}
\end{lemma}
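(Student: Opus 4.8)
The plan is to construct an explicit constant supersolution and a subsolution obtained by translating downward a solution of a linear Poisson equation. Write $h := 1 - 4\pi N f$, so that the two inequalities to be established read $\Delta U - e^{U+u_0} + h \le 0$ and $\Delta Z - e^{Z+u_0} + h \ge 0$; note that $\int_V h\, d\mu = |V| - 4\pi N =: \epsilon_0$, which is positive precisely by the hypothesis $|V| > 4\pi N$. Set $\delta := \epsilon_0/|V| > 0$.

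For the supersolution I would take $U$ to be a sufficiently large constant $A$. Since $V$ is finite, $h$ and $u_0$ are bounded on $V$, so $A$ can be chosen with $e^{A + u_0(x)} \ge h(x)$ for all $x \in V$; then $\Delta U \equiv 0$ and the supersolution inequality holds at every vertex.

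For the subsolution, the key observation is that $\int_V (\delta - h)\, d\mu = \delta|V| - \epsilon_0 = 0$, so Lemma \ref{2.4} provides a function $Z_1 : V \to \mathbb{R}$ with $\Delta Z_1 = \delta - h$. Putting $Z := Z_1 - c$ for a constant $c > 0$, one has $\Delta Z = \delta - h$, hence
\[
\Delta Z - e^{Z+u_0} + h = \delta - e^{-c}\, e^{Z_1 + u_0} \ge 0
\]
as soon as $c$ is large enough that $e^{-c} \max_{x \in V} e^{Z_1(x) + u_0(x)} \le \delta$, which is possible because $V$ is finite. Finally, enlarging $A$ if necessary so that $A \ge \max_{x \in V} Z_1(x)$, and recalling $c > 0$, gives $U = A \ge Z_1 - c = Z$ on $V$.

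The one real difficulty is the subsolution: a bare negative constant does not suffice, since $h = 1 - 4\pi N f$ need not be pointwise positive — only its average over $V$ is — so the term $-h$ can have the wrong sign at some vertices. Subtracting the Laplacian $\Delta Z_1$ flattens $h$ to the strictly positive constant $\delta$, after which the exponential term is absorbed by letting $Z \to -\infty$; and it is exactly the inequality $|V| > 4\pi N$ that guarantees $\delta > 0$, so this is where the hypothesis is used.
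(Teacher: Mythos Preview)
Your proof is correct. The subsolution construction is essentially identical to the paper's: the paper solves $\Delta Z = 4\pi N f - \tfrac{4\pi N}{|V|}$, which after rewriting is exactly your equation $\Delta Z_1 = \delta - h$ (with $\delta = 1 - 4\pi N/|V|$), and then shifts down by a constant so that $e^{Z+u_0}\le \delta$, just as you do.

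Where you genuinely differ is in the supersolution and in the ordering $U\ge Z$. The paper builds $U$ by invoking the operator machinery of Propositions~\ref{p1}--\ref{p2}: it shows the operator $P=\Delta-e^{u_0}-1$ is invertible with compact inverse, applies the Fredholm alternative to solve the linear equation $(\Delta-e^{u_0})U = 4\pi N f - 1$, and then uses the elementary inequality $e^{U}\ge U$ to conclude the supersolution inequality. Having constructed $U$ and $Z$ separately, the paper then combines the two differential inequalities and appeals to a maximum-principle argument (as in Lemma~\ref{32}) to deduce $Z\le U$. Your route is considerably more elementary on both counts: a large constant is already a supersolution since $h$ and $u_0$ are bounded on the finite graph, and the ordering $U\ge Z$ is then immediate by choosing that constant above $\max_V Z_1$. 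Your approach avoids Propositions~\ref{p1}--\ref{p2} and the Fredholm alternative entirely; the paper's approach, while heavier here, does justify the introduction of $P^{-1}$, though in fact that operator is never used again after this lemma.
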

\begin{proof}
	By Propositions \ref{p1} and \ref{p2}, we see that $-P^{-1}$ is a compact operator. Suppose $(1+P^{-1}) U=0,$ then we deduce that $(\Delta-e^{u_0}) U=0.$ By a similar argument as Lemma \ref{32}, we obtain $U=0$. Thus, by Fredholm alternative, we deduce that there exists $U\in H^{1}(V)$ such that $$(1+P^{-1}) U=P^{-1} (4\pi N f-1).$$ Thus, $U$ satisfies $$(\Delta-e^{u_0}) U=4\pi N f- 1.$$ Therefore, we obtain \begin{equation*}
		\Delta U-e^{U+u_0}-(4\pi N f-1)\le \Delta U-e^{U+u_0}-(4\pi N f -1)+e^{u_0}(e^{U}-U)=0.
	\end{equation*} 
	Let $Z$ satisfying $log(1-\frac{4\pi N }{|V|}) -u_0 \ge Z$ be a solution of \begin{equation*}
		\Delta Z= 4\pi N f- \frac{4\pi N }{|V|}.
	\end{equation*} 
	It is easy to see that 
	\begin{equation*}
		\Delta Z-e^{Z+u_0}-(4\pi N f -1)=1-\frac{4\pi N }{|V|}-e^{Z+u_0} \ge 0.
	\end{equation*}
	\begin{equation*}
		\Delta Z-\Delta U\ge e^{Z+u_0}-e^{U+u_0}.
	\end{equation*}
	By mean value theorem, there exists $\eta$ such that
	$$\Delta Z-\Delta U\ge e^{Z+u_0}-e^{U+u_0}=e^{\eta +u_0}(Z-U). $$
	By a similar argument as Lemma \ref{32}, we deduce that $Z\le U$ on $V$.
	
	The proof is complete.
\end{proof}

\begin{lemma}\label{34}
	Suppose that $N<\frac{|V|}{4 \pi}$ and $u_0$ satisfies the equation \eqref{31}. Then the equation \eqref{3} admits a unique solution $W(x)$. 
\end{lemma}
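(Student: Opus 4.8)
The plan is to run a monotone iteration (the method of sub- and supersolutions) between the ordered pair $Z \le U$ produced in Lemma \ref{33}. First I would rewrite equation \eqref{3} in the form $\Delta v - e^{v+u_0} = 4\pi N f - 1$, and fix a constant $K > 0$ large enough that the map $t \mapsto e^{t+u_0(x)} + K t$ is nondecreasing on the interval $[\min_V Z, \max_V U]$ for every $x \in V$; since $V$ is finite and $u_0$ is fixed, such a $K$ exists (e.g. any $K \ge \max_V e^{U+u_0}$ works on the relevant range). Then I would define the iteration scheme: set $W_0 = U$ (the supersolution), and given $W_m$, let $W_{m+1}$ be the unique solution of the linear equation
\begin{equation*}
	(\Delta - K) W_{m+1} = e^{W_m + u_0} - K W_m + 4\pi N f - 1 ,
\end{equation*}
whose solvability and uniqueness follow because $\Delta - K$ is invertible on $V^{\mathbb{R}}$ — this is exactly the content behind Lemma \ref{22} (if $(\Delta-K)w = 0$ then $w \le 0$ and $-w \le 0$, so $w \equiv 0$, hence injectivity, and on a finite-dimensional space injective implies bijective).

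Next I would establish the chain of inequalities $Z = \underline{W} \le \cdots \le W_{m+1} \le W_m \le \cdots \le W_0 = U$ by induction, using Lemma \ref{22} (the maximum principle) at each step. For the first step, $(\Delta - K)(W_1 - W_0) = e^{W_0+u_0} - K W_0 + 4\pi N f - 1 - \Delta W_0 + K W_0 = (e^{U+u_0} + 4\pi N f - 1) - \Delta U \ge 0$ by the supersolution inequality of Lemma \ref{33}, so $W_1 \le W_0$. For the inductive step, $(\Delta-K)(W_{m+1}-W_m) = (e^{W_m+u_0} - KW_m) - (e^{W_{m-1}+u_0} - KW_{m-1})$, which is $\le 0$ because $W_{m-1} \ge W_m$ and $t \mapsto e^{t+u_0} - Kt$ is nonincreasing on the range in question (equivalently $e^{t+u_0}+(-K)t$... let me instead say: $t \mapsto Kt - e^{t+u_0}$ is nondecreasing there), giving $W_{m+1} \le W_m$ via Lemma \ref{22}. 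Similarly, comparing $W_{m+1}$ with the subsolution $Z$: $(\Delta-K)(Z - W_{m+1}) = \Delta Z - KZ - e^{W_m+u_0} + KW_m - 4\pi N f + 1 \ge (e^{Z+u_0} + 4\pi N f - 1) - KZ - e^{W_m+u_0} + KW_m - 4\pi N f + 1 = (e^{Z+u_0} - KZ) - (e^{W_m+u_0} - KW_m) \ge 0$ using the subsolution inequality and $W_m \le U$ together with monotonicity, plus the inductive hypothesis $Z \le W_m$; hence $Z \le W_{m+1}$. Thus $\{W_m(x)\}$ is, for each fixed $x \in V$, a nonincreasing sequence bounded below by $Z(x)$, so it converges pointwise to some limit $W(x)$.

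Finally I would pass to the limit. Since $V$ is a finite set, pointwise convergence is convergence in every norm, and $\Delta$ is a bounded linear operator on the finite-dimensional space $V^{\mathbb{R}}$, so $\Delta W_{m+1} \to \Delta W$; letting $m \to \infty$ in the defining equation for $W_{m+1}$ gives $(\Delta - K)W = e^{W+u_0} - KW + 4\pi N f - 1$, i.e. $\Delta W = e^{W+u_0} - 1 + 4\pi N f$, so $W$ solves \eqref{3}. Uniqueness is already supplied by Lemma \ref{32}, and existence of the starting pair $Z \le U$ by Lemma \ref{33} under the hypothesis $N < |V|/(4\pi)$. I expect the only mildly delicate point to be the bookkeeping in the choice of $K$ and the verification that the monotonicity of $t \mapsto Kt - e^{t+u_0(x)}$ holds uniformly over $x$ on the a priori interval $[\min_V Z, \max_V U]$ — but because the graph is finite this is just a finite maximum and causes no real difficulty; everything else is a routine induction plus Lemma \ref{22}.
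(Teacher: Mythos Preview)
Your proposal is correct and follows essentially the same monotone iteration scheme as the paper: start from the supersolution $U$, iterate via $(\Delta-K)W_{m+1}=e^{W_m+u_0}-KW_m+4\pi Nf-1$ with $K\ge\max_V e^{U+u_0}$, use Lemma~\ref{22} to get $Z\le W_{m+1}\le W_m\le U$ by induction, and pass to the pointwise limit, with uniqueness from Lemma~\ref{32}. The only slip is the sign in the inductive step for monotonicity: the quantity $(\Delta-K)(W_{m+1}-W_m)=(e^{W_m+u_0}-KW_m)-(e^{W_{m-1}+u_0}-KW_{m-1})$ is $\ge 0$ (not $\le 0$) when $W_m\le W_{m-1}$ and $t\mapsto e^{t+u_0}-Kt$ is nonincreasing, which is precisely what Lemma~\ref{22} requires to conclude $W_{m+1}\le W_m$; your conclusion is correct even though the stated inequality is reversed.
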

\begin{proof}
	Choose a constant $K> \max\limits_{V}e^{u_0+U}$, define a sequence $\{w_n \}$ by an iterative scheme 
	\begin{equation}\label{8}
		\begin{aligned}
			(\Delta -K)w_{n+1}&=e^{u_0 +w_n}- K w_n+ 4\pi N f- 1, n=0,1,2,\cdots, \\
			w_0&= U.
		\end{aligned}
	\end{equation}
	
	We now prove 
	\begin{equation*}
		w_k \le U~ \text{for}~k\ge 1
	\end{equation*}
	by induction. By \eqref{8}  ,we see that 
	\begin{equation*}
		\Delta(w_1 - w_0)\ge K (w_1- w_0).		
	\end{equation*}
	By a similar argument as Lemma \ref{32}, we can show that $w_1 \le U$ on $V$. Suppose that $w_k\le U$ on $V$ for some integer $k\ge 1$, then
	\begin{equation}
		\begin{aligned}
			(\Delta- K) (w_{k+1} -U) &\ge e^{u_0+w_k}-e^{u_0+ U}+K (U-w_k) \\
			&= (K-e^{u_0 +\lambda} )( U- w_k)\\
			& \ge (K-e^{u_0 +U}) (U- w_k) \\
			& \ge 0,
		\end{aligned}
	\end{equation}
	where $w_k \le \lambda \le U$.
	By Lemma \ref{22}, we deduce that $w_{k+1}\le U$ on $V$.
	
	We next show that 
	\begin{equation*}
		w_{n+1} \le w_n \le \dots \le w_0
	\end{equation*}
	for any $n\ge 1$ by induction.

	Assume that $w_k\le w_{k-1}$ on $V$ for some integer $k\ge 1$, then we deduce that 
	\begin{equation*}
		\begin{aligned}
			\Delta(w_{k+1} - w_{k})- K(w_{k+1} - w_{k}) &=(e^{u_0+ w_k}- e^{u_0+w_{k-1}})-K (w_{k} - w_{k-1}) \\	
			&=(e^{u_0 + \eta} -K)(w_{k}-w_{k-1})\\
			&\ge 0	,
		\end{aligned}
	\end{equation*}
	where $w_{k}\le \eta \le w_{k-1}$. By Lemma \ref{22} , we get $w_{k+1} \le w_{k}$ on $V$.
	
	By Lemma \ref{33}, $Z\le U$. Suppose $Z\le w_{k}$ for some integer $k\ge 1$; then 
	\begin{equation*}
		\begin{aligned}
			(\Delta-K)(Z-w_{k+1})&\ge e^{u_0 +Z}-e^{u_0+ w_{k}}- K(Z-w_{k})\\
			&=(e^{u_0 +\xi } -K)(Z-w_k)\\
			&\ge 0,
		\end{aligned}
	\end{equation*}
	where $Z\le \xi \le w_k$. By Lemma \ref{22}, we have $Z \le w_{k+1}$ on $V$.  
	Therefore, we can define $W(x):=\lim\limits_{n\to +\infty} w_{n} (x)$. Clearly, $Z\le W \le U$ on $V$. Letting $n\to +\infty$ in \eqref{8}, we deduce that $W(x)$ satisfies \eqref{3}. By Lemma \ref{32}, $W(x)$ is the unique solution of the equation \eqref{32}.
\end{proof}

\begin{lemma}\label{35}
	There exists at most one solution of the equation \eqref{E}.
\end{lemma}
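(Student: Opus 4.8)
The plan is to reduce the uniqueness of solutions to the original equation \eqref{E} to the already-established uniqueness for the transformed equation \eqref{3}, via the substitution $v = u - u_0$ that was set up just before Proposition \ref{p1}. First I would recall that $u_0$ is the fixed solution of the Poisson equation $\Delta u_0 = -4\pi N f + 4\pi \sum_{j=1}^{k} n_j \delta_{z_j}$, which exists and is unique up to an additive constant by Lemma \ref{2.4}; I will fix one such $u_0$ once and for all. Then, given any two solutions $u_1, u_2$ of \eqref{E}, I would set $v_i := u_i - u_0$ for $i = 1,2$ and verify by direct substitution that each $v_i$ satisfies \eqref{3}, namely $\Delta v_i = e^{v_i + u_0} - 1 + 4\pi N f$. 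This is the same computation already displayed in the text when deriving \eqref{3} from \eqref{E}.

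The heart of the argument is then immediate: by Lemma \ref{32}, equation \eqref{3} has at most one solution, so $v_1 \equiv v_2$ on $V$, and therefore $u_1 = v_1 + u_0 \equiv v_2 + u_0 = u_2$ on $V$. Hence \eqref{E} has at most one solution. The only subtlety worth noting explicitly is that the conclusion does not depend on the choice of the constant ambiguity in $u_0$: if $u_0$ is replaced by $u_0 + c$, then each $v_i$ is replaced by $v_i - c$, the equation \eqref{3} becomes $\Delta(v_i - c) = e^{(v_i - c) + (u_0 + c)} - 1 + 4\pi N f$, which is again \eqref{3} with $u_0$ shifted, and the bijection $u \leftrightarrow v$ between solution sets is unchanged. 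So the reduction is well-posed regardless.

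I do not anticipate a genuine obstacle here — the lemma is essentially a bookkeeping corollary of Lemma \ref{32} together with the affine change of variables. The one point requiring a line of care is making sure the substitution is stated cleanly: one must check that $v_i$ lies in the function space on which Lemma \ref{32} operates (all real functions on the finite vertex set $V$, so this is automatic) and that the Dirac terms cancel correctly, i.e. that $\Delta u_i - \Delta u_0 = (e^{u_i} - 1 + 4\pi N f) - (-4\pi N f + 4\pi \sum_j n_j \delta_{z_j} \text{ appropriately grouped})$ collapses to $e^{v_i + u_0} - 1 + 4\pi N f$ with no leftover singular contribution. Since both $u_i$ and $u_0$ carry exactly the same $4\pi \sum_j n_j \delta_{z_j}$ term on the right-hand side (that is how \eqref{3} was derived in the first place), this cancellation is exact, and the proof concludes in a few lines.
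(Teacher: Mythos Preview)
Your proposal is correct, but it takes a different route from the paper's own proof. The paper does \emph{not} invoke Lemma~\ref{32} here; instead it repeats the maximum-principle argument directly on equation~\eqref{E}: given two solutions $u,v$ of \eqref{E}, it subtracts to get $\Delta(u-v)=e^{\zeta}(u-v)$ by the mean value theorem, then argues that $M:=\max_V(u-v)$ cannot be positive (since at a maximum point $x_0$ one would have $\Delta(u-v)(x_0)>0$ while the graph Laplacian at a maximum is $\le 0$), whence $u\le v$; symmetry gives $v\le u$ and thus $u\equiv v$. Your approach---transporting the problem back to \eqref{3} via $v_i=u_i-u_0$ and citing Lemma~\ref{32}---is more economical, since it avoids rerunning an argument that is verbatim the proof of Lemma~\ref{32}; the paper's approach has the minor advantage of being self-contained at the level of the original equation. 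One cosmetic point: the intermediate expression you wrote for $\Delta u_i-\Delta u_0$ is garbled (the $4\pi N f$ term appears on the wrong side before your ``appropriately grouped'' hedge); the clean line is simply $\Delta v_i=\bigl(e^{u_i}-1+4\pi\sum_s n_s\delta_{z_s}\bigr)-\bigl(-4\pi N f+4\pi\sum_s n_s\delta_{z_s}\bigr)=e^{v_i+u_0}-1+4\pi N f$, exactly as derived before Proposition~\ref{p1}.
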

\begin{proof}
	For any two solutions $u$ and $v$ of the equation \eqref{E}, by mean value theorem, there exists $\zeta$ such that 
	\begin{equation}\label{m}
		\begin{aligned}
			\Delta(u-v) &= e^{u}-1- (e^{v}- 1) \\
			&=e^{\zeta} (u-v).
		\end{aligned}
	\end{equation}
	Let $M:=\max\limits_{V} (u-v)=(u-v) (x_0)$. We assert that $M\le 0$. Otherwise, $M>0$. Then, by \eqref{m}, we conclude that 
	$$\Delta(u-v)(x_0)>0.$$ By \eqref{21}, we conclude that $\Delta (u-v) (x_0)\le 0,$ which is a contradiction. Thus, we obtain $u\le v$ on $V$. By a similar discussion as above, we obtain $u\ge v$ on $V$. Therefore, we know that $u \equiv v$ on $V$. 
	
	The proof now is complete.
\end{proof}

\begin{proof}[Proof of Theorem \ref{t1}]
	The desired conclusion follows directly from Lemmas \ref{31}, \ref{34} and \ref{35}.
\end{proof}

\section{Concluding remarks}
Let $G=(V,E)$ be a connected finite graph. By the use of the maximum principle and the upper and lower solutions method, we show that there exists a unique solution to the Bogomol'nyi equation \eqref{E} if and only if 
\begin{equation}
	\sum_{i=1}^{k} n_{i} < \frac{|V|}{4\pi}.
\end{equation}
In particular, when $\mu(x)\equiv 1$, $|V|$ denotes the number of 
vertices of $G$. 

A natural question is to consider the equation \eqref{E} on a locally finite graph.  This interesting question deserves further research.

\end{document}